\documentclass[12pt,onesided]{article}

\usepackage[leqno]{amsmath}
\usepackage{amsthm}
\usepackage{amssymb}
\usepackage{amsfonts}
\usepackage{array}
\usepackage{hyperref}

\pagestyle{plain}

\topmargin-10mm
\textheight230mm
\textwidth160mm
\oddsidemargin0mm

\sloppy

%\numberwithin{equation}{section}

\newtheorem{theorem}{THEOREM}
\newtheorem{corollary}{COROLLARY}

\def\bi{\begin{itemize}}
\def\ei{\end{itemize}}

\setcounter{tocdepth}{3}

%%%%%%%%%%%%%%%%%%%%%%%%%%%%%%%%%%%%%%%%%%%%%%%%%%%%%%%%%%%%
%%%%%%%%%%%%%%%%%%%%%%%%%%%%%%%%%%%%%%%%%%%%%%%%%%%%%%%%%%%%

\begin{document}

\title{Existence, uniqueness, and minimality of\\ the Jordan measure decomposition}
\author{Tom Fischer\thanks{Institute of Mathematics, University of Wuerzburg, 
Campus Hubland Nord, Emil-Fischer-Strasse 30, 97074 Wuerzburg, Germany.
Tel.: +49 931 3188911.
E-mail: {\tt tom.fischer@uni-wuerzburg.de}.
}\\
University of Wuerzburg}
\date{\today}

\maketitle

\begin{abstract}
This is a note of purely didactical purpose as the proof of the Jordan measure decomposition
is often omitted in the related literature. Elementary proofs are provided for the existence, the 
uniqueness, and the minimality property of the Jordan decomposition for a finite signed measure.
\end{abstract}

%%%%%%%%%%%%%%%%%%%%%%%%%%%%%%%%%%%%%%%%%%%%%%%%%%%%%%%%%%%%%%%%%%%%%%%%%%%%%%%%%%%%%%%%%%%%%%%%%%%%%%%%%%%%
%%%%%%%%%%%%%%%%%%%%%%%%%%%%%%%%%%%%%%%%%%%%%%%%%%%%%%%%%%%%%%%%%%%%%%%%%%%%%%%%%%%%%%%%%%%%%%%%%%%%%%%%%%%%

For the following recall that the symmentric difference $A\triangle B$ of two sets $A$ and $B$ is
defined by $A\triangle B = (A \cup B)\setminus(A\cap B)$.

\begin{theorem}[Hahn Decomposition]
\label{H}
For a measurable space $(\Omega, \mathcal{A})$ and a finite signed measure $\mu$
on $(\Omega, \mathcal{A})$ there exists $P\in\mathcal{A}$ and 
$N = \Omega\setminus P\in\mathcal{A}$
such that for any $A\in\mathcal{A}$ with $A\subset P$ one has that $\mu(A) \geq 0$, 
and for any $A\in\mathcal{A}$ with $A\subset N$ one has that $\mu(A) \leq 0$.  
\end{theorem}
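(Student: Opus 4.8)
The plan is to build $P$ as a maximal positive set, where I call a set $S\in\mathcal{A}$ \emph{positive} if $\mu(B)\geq 0$ for every measurable $B\subseteq S$, and \emph{negative} if $\mu(B)\leq 0$ for every measurable $B\subseteq S$. The theorem then amounts to finding a positive set whose complement is negative. The whole argument rests on one key lemma: \emph{every measurable set $A$ with $\mu(A)>0$ contains a positive subset $B$ with $\mu(B)\geq\mu(A)$.} Granting this, I would finish via a supremum argument, and the construction of the positive set itself is what I expect to require the most care.

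To prove the key lemma I would peel off the negative parts of $A$ greedily. If $A$ is already positive there is nothing to do; otherwise let $n_1$ be the smallest positive integer for which some measurable $C_1\subseteq A$ has $\mu(C_1)<-1/n_1$, and fix such a $C_1$. Inductively, let $n_{k+1}$ be the smallest positive integer for which some measurable $C_{k+1}\subseteq A\setminus(C_1\cup\dots\cup C_k)$ has $\mu(C_{k+1})<-1/n_{k+1}$, and fix such a $C_{k+1}$; if no such integer exists the process stops and the remaining set is positive. Set $B=A\setminus\bigcup_k C_k$. Since the $C_k$ are disjoint, the series $\sum_k\mu(C_k)=\mu(\bigcup_k C_k)$ converges to a finite value as $\mu$ is finite, so $\mu(C_k)\to 0$ and hence $n_k\to\infty$; moreover $\mu(B)=\mu(A)-\sum_k\mu(C_k)\geq\mu(A)$ because each $\mu(C_k)<0$. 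To see that $B$ is positive, suppose some measurable $D\subseteq B$ had $\mu(D)<-1/m$. Because $n_k\to\infty$, there is a stage $k$ with $n_{k+1}>m$, yet $D\subseteq A\setminus(C_1\cup\dots\cup C_k)$ would then contradict the minimality of $n_{k+1}$. Hence $\mu(D)\geq 0$ for every measurable $D\subseteq B$, so $B$ is positive.

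With the lemma in hand I would set $s=\sup\{\mu(S):S\in\mathcal{A}\text{ is positive}\}$, which satisfies $0\leq s<\infty$ since $\emptyset$ is positive and $\mu$ is finite. Choosing positive sets $P_n$ with $\mu(P_n)\to s$ and putting $P=\bigcup_n P_n$, one checks that a countable union of positive sets is positive (decompose any measurable subset into disjoint pieces lying in the individual $P_n$) and that $\mu(P)\geq\mu(P_n)$ for every $n$, because $P\setminus P_n\subseteq P$ is positive; hence $\mu(P)=s$. Finally I would show $N=\Omega\setminus P$ is negative by contradiction: if some measurable $A\subseteq N$ had $\mu(A)>0$, the lemma would supply a positive $B\subseteq A$ with $\mu(B)>0$, and then $P\cup B$ would be positive with $\mu(P\cup B)=s+\mu(B)>s$, contradicting the definition of $s$.

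The main obstacle is the proof that the set $B$ in the key lemma is genuinely positive. The delicate point is the bookkeeping with the smallest integers $n_k$: one must extract from finiteness of $\mu$ that $n_k\to\infty$, and then invoke the minimality in the selection of $n_{k+1}$ to rule out any negative subset of $B$. Once that is secured, the supremum construction and the passage to the complement are routine.
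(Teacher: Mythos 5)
The paper itself does not prove Theorem \ref{H} at all --- it explicitly defers to Billingsley (1995) and Bauer (1992) --- so there is no in-paper argument to measure yours against; what you have written is the classical textbook proof found in those references, and it is correct. Your key lemma (every set of positive measure contains a positive subset of at least that measure, obtained by greedily removing sets of measure below $-1/n_k$ with $n_k$ chosen minimal) is sound: the $C_k$ are disjoint, so countable additivity and finiteness of $\mu$ force $\sum_k \mu(C_k)$ to converge, hence $\mu(C_k)\to 0$ and $n_k\to\infty$, and the minimality of $n_{k+1}$ then excludes any subset of $B$ of negative measure exactly as you say; the terminating case is also handled. The exhaustion step (a countable union of positive sets is positive, and $\mu(P)\geq\mu(P_n)$ since $P\setminus P_n\subseteq P$) and the final contradiction for $N$ are likewise correct. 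One small point of hygiene: the claim $s<\infty$ does not follow merely from $\mu$ being finite-valued --- boundedness of a finite signed measure is itself a (small) theorem --- but you do not need it in advance: your construction yields $\mu(P)\geq\mu(P_n)$ for every $n$, hence $s\leq\mu(P)<\infty$ a posteriori, and nothing earlier in the argument uses finiteness of $s$. With that reading the proof is complete and supplies exactly the ingredient the paper takes as given.
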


% Bauer M&IT S.123, Satz 18.1
% Billingsley, p.420, Theorem 32.1

The pair $(P,N)$ is called a {\em Hahn decomposition} of $\mu$.
The decomposition is {\em not unique}, but essentially unique in the sense that for a second
decomposition $(P',N')$ one has that the symmetric differences
$P\triangle P'$ and $N\triangle N'$ are null sets of $\mu$ in
the strong sense that for any $A\in\mathcal{A}$ with $A\subset P\triangle P'$ ($A\subset N\triangle N'$)
one has $\mu(A)=0$. See textbooks such as Billingsley (1995) or Bauer (1992) for a proof of Theorem \ref{H}.

\begin{theorem}[Jordan Decomposition]
\label{J}
Every finite signed measure $\mu$ has a unique decomposition into a difference 
$\mu = \mu^+ - \mu^-$ of two non-negative finite measures $\mu^+$ and $\mu^-$
such that for any Hahn decomposition $(P,N)$ of $\mu$ one has
for $A\in\mathcal{A}$ that 
$\mu^+(A) = 0$ if $A \subset N$ and $\mu^-(A) = 0$ if $A \subset P$. 
\end{theorem}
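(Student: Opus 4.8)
The plan is to construct the decomposition explicitly from a Hahn decomposition and then verify separately the stated null property and the uniqueness. First I would invoke Theorem \ref{H} to fix one Hahn decomposition $(P,N)$ of $\mu$ and define the two set functions $\mu^+(A) = \mu(A\cap P)$ and $\mu^-(A) = -\mu(A\cap N)$ for $A\in\mathcal{A}$. Non-negativity is immediate from the defining property of the Hahn decomposition: $A\cap P\subset P$ forces $\mu(A\cap P)\geq 0$, and $A\cap N\subset N$ forces $\mu(A\cap N)\leq 0$. Finiteness follows from finiteness of $\mu$, and countable additivity of $\mu^+$ and $\mu^-$ is inherited from that of $\mu$ via $(\bigcup_n A_n)\cap P = \bigcup_n(A_n\cap P)$ and the analogous identity for $N$. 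Since $P$ and $N$ partition $\Omega$, additivity yields $\mu^+(A) - \mu^-(A) = \mu(A\cap P) + \mu(A\cap N) = \mu(A)$, so $\mu = \mu^+ - \mu^-$.

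Next I would verify the null property, where the key subtlety is that it must hold for \emph{every} Hahn decomposition, not just the one used in the construction. Let $(P',N')$ be an arbitrary Hahn decomposition and suppose $A\subset N'$. Then $A\cap P\subset P$ gives $\mu(A\cap P)\geq 0$, while $A\cap P\subset A\subset N'$ gives $\mu(A\cap P)\leq 0$, so $\mu^+(A) = \mu(A\cap P) = 0$. The claim $\mu^-(A) = 0$ for $A\subset P'$ follows symmetrically. This step uses only the sign properties from Theorem \ref{H} applied directly to $(P',N')$, so the essential (symmetric-difference) uniqueness of the Hahn decomposition is absorbed automatically and need not be cited separately.

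For uniqueness, suppose $\mu = \mu^+ - \mu^- = \nu^+ - \nu^-$ are two decompositions with the stated property. Fixing any Hahn decomposition $(P,N)$ and any $A\in\mathcal{A}$, I would split $A = (A\cap P)\cup(A\cap N)$. Since $A\cap N\subset N$, the null property gives $\mu^+(A\cap N) = 0$, whence $\mu^+(A) = \mu^+(A\cap P)$; since $A\cap P\subset P$ gives $\mu^-(A\cap P) = 0$, one has $\mu(A\cap P) = \mu^+(A\cap P) - \mu^-(A\cap P) = \mu^+(A\cap P)$. Combining, $\mu^+(A) = \mu(A\cap P)$, and the identical computation applied to $\nu^+$ gives $\nu^+(A) = \mu(A\cap P)$, so $\mu^+ = \nu^+$; the equality $\mu^- = \nu^-$ then follows from $\mu^- = \mu^+ - \mu = \nu^+ - \mu = \nu^-$.

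I expect the main obstacle to be conceptual rather than computational: keeping the roles of the \emph{constructing} Hahn decomposition and the \emph{arbitrary} one cleanly separated, and recognizing that the uniqueness argument in fact pins down the formula $\mu^+(A) = \mu(A\cap P)$ for \emph{any} Hahn decomposition $(P,N)$. This last observation is precisely what guarantees that the construction does not depend on the initial choice, tying the existence and uniqueness halves of the argument together.
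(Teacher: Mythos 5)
Your proposal is correct, and its overall architecture (construct $\mu^{\pm}$ from one Hahn decomposition via $\mu^+(A)=\mu(A\cap P)$, $\mu^-(A)=-\mu(A\cap N)$, then pin down uniqueness by showing any Jordan decomposition must satisfy these formulas for every Hahn decomposition) coincides with the paper's. The one place where you genuinely diverge is the verification that the null property holds for an \emph{arbitrary} Hahn decomposition $(P',N')$, and your route is the more elementary one. The paper writes $P$ as the disjoint union $(P\cap P')\cup\bigl(P\cap(P\triangle P')\bigr)$ and then invokes the essential uniqueness of the Hahn decomposition --- the fact that every measurable subset of $P\triangle P'$ is a $\mu$-null set --- to kill the second term. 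You instead sandwich directly: for $A\subset N'$ one has $A\cap P\subset P$, so $\mu(A\cap P)\geq 0$ by the sign property of $(P,N)$, and $A\cap P\subset N'$, so $\mu(A\cap P)\leq 0$ by the sign property of $(P',N')$, whence $\mu^+(A)=0$. This uses only Theorem \ref{H} itself and avoids citing (or re-proving) the essential-uniqueness statement, which the paper states without proof after Theorem \ref{H}; in that sense your argument is more self-contained. What the paper's route buys in exchange is that it makes the role of the essential uniqueness of the Hahn decomposition explicit, which feeds directly into the remark following the proof that $\mu(A\cap P)=\mu(A\cap P')$ for any two Hahn decompositions. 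Your uniqueness step and the concluding observation that $\mu^+(A)=\mu(A\cap P)$ for every Hahn decomposition $(P,N)$ are exactly the paper's equations \eqref{1} and \eqref{4}.
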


The pair $(\mu^+, \mu^-)$ is called the {\em Jordan decomposition} of $\mu$.
Note that the Jordan decomposition is unique, while the Hahn decomposition is only essentially unique.

\begin{proof}[Proof of Theorem \ref{J}]
Existence:
Let $(P,N)$ be a Hahn decomposition of $\mu$ by Theorem \ref{H} and
for all $A\in\mathcal{A}$ define $\mu^+$ and $\mu^-$ by
\begin{equation}
\label{01}
\mu^+(A) = \mu(A \cap P)
\end{equation}
and
\begin{equation}
\label{02}
\mu^-(A) = - \mu(A \cap N) .
\end{equation}
Clearly, $\mu = \mu^+ - \mu^-$, and $\mu^+$ and $\mu^-$ are non-negative finite measures on $(\Omega, \mathcal{A})$.
We now have to show that $\mu^+(A) = 0$ if $A \subset N'$, and $\mu^-(A) = 0$ if $A \subset P'$ for {\em any}
Hahn decomposition $(P',N')$ of $\mu$. Assume first $A \subset N'$. 
Observe that
\begin{equation}
P = (P \cap P') \cup (P \cap (P \triangle P')) 
\end{equation}
and
\begin{equation}
(P \cap P') \cap (P \cap (P \triangle P')) = \emptyset .
\end{equation}
Now, 
\begin{equation}
\mu^+(A) 
= 
\mu(A \cap P)
=
\mu(A \cap (P \cap P')) + \mu(A \cap (P \cap (P \triangle P')))
= 0 ,
\end{equation}
as $A \cap (P \cap P') = \emptyset$,
and 
$A \cap (P \cap (P \triangle P'))$ contained in $P \triangle P'$ and therefore a $\mu$ null set. 
The statement for $A \subset P'$ follows in a similar manner.\\
Uniqueness:
Consider now a Jordan decomposition $\mu = \mu^+ - \mu^-$
and an arbitrary Hahn decomposition $(P',N')$ of $\mu$. For any element $A\in\mathcal{A}$ 
for which $A \subset P$,
one obtains $\mu(A) = \mu^+(A) - \mu^-(A) = \mu^+(A)$. 
For an arbitrary $A\in\mathcal{A}$ one therefore obtains
\begin{equation}
\label{1}
\mu^+(A)
=
\mu^+(A \cap P') + \mu^+(A \cap N') 
= 
\mu^+(A \cap P')
=
\mu(A \cap P') .
\end{equation}
Likewise,
\begin{equation} 
\label{4}
\mu^-(A)
=
- \mu(A \cap N')
\end{equation}
can be shown. Hence, any two Jordan decompositions are identical
and $(\mu^+, \mu^-)$ is uniquely determined by the choice of $\mu$.
\end{proof}

Note that as \eqref{01} and \eqref{02} defines a Jordan decomposition,
\eqref{1} and \eqref{4} also show that 
\begin{eqnarray}
\mu(A \cap P) 
& = &
\mu(A \cap P') \quad \text{ and}\\
\mu(A \cap N)
& = &
\mu(A \cap N')
\end{eqnarray}
for any two (distinct) Hahn decompositions $(P,N)$ and $(P',N')$ of $\mu$.

\begin{corollary}
Given a Hahn decomposition $(P,N)$,
if two non-negative finite measures $\mu^+$ and $\mu^-$ with 
$\mu = \mu^+ - \mu^-$
have the property that, for $A\in\mathcal{A}$,
$\mu^+(A) = 0$ if $A \subset N$ and $\mu^-(A) = 0$ if $A \subset P$, then 
$(\mu^+, \mu^-)$ is the Jordan decomposition of $\mu$.
\end{corollary}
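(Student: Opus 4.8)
The plan is to reduce everything to the explicit construction used in the existence part of Theorem \ref{J}. Since the hypothesis supplies only the single Hahn decomposition $(P,N)$, I would first observe that this is precisely the amount of data the existence proof consumes: the candidate pair built from $(P,N)$ via \eqref{01} and \eqref{02} is, by Theorem \ref{J}, the unique Jordan decomposition of $\mu$. It therefore suffices to show that the given pair $(\mu^+,\mu^-)$ coincides with $A \mapsto \mu(A\cap P)$ and $A \mapsto -\mu(A\cap N)$.

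To do this, I would exploit that $P$ and $N$ partition $\Omega$, so every $A\in\mathcal{A}$ splits as the disjoint union $A = (A\cap P)\cup(A\cap N)$. Using finite additivity of $\mu^+$ together with the hypothesis $\mu^+(A\cap N)=0$ (valid since $A\cap N\subset N$), I obtain $\mu^+(A)=\mu^+(A\cap P)$. Symmetrically, from $\mu^-(A\cap P)=0$ I get $\mu^-(A)=\mu^-(A\cap N)$. The final step is to feed these into the identity $\mu=\mu^+-\mu^-$ evaluated on the pieces $A\cap P$ and $A\cap N$: on $A\cap P$ the negative part vanishes, giving $\mu(A\cap P)=\mu^+(A\cap P)=\mu^+(A)$, while on $A\cap N$ the positive part vanishes, giving $\mu(A\cap N)=-\mu^-(A\cap N)=-\mu^-(A)$.

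Combining these yields $\mu^+(A)=\mu(A\cap P)$ and $\mu^-(A)=-\mu(A\cap N)$ for every $A\in\mathcal{A}$, which are exactly \eqref{01} and \eqref{02}. Hence $(\mu^+,\mu^-)$ is the Jordan decomposition constructed from $(P,N)$, and by the uniqueness established in Theorem \ref{J} it is \emph{the} Jordan decomposition of $\mu$. I do not expect a genuine technical obstacle here; the only point worth flagging is conceptual rather than computational. The corollary strengthens the defining property from ``holds for every Hahn decomposition'' to ``holds for some single Hahn decomposition,'' and the reason this apparent weakening loses nothing is precisely that the explicit formulas \eqref{01}--\eqref{02} already pin the measures down from one decomposition alone.
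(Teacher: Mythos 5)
Your proof is correct and follows essentially the same route as the paper: you derive $\mu^+(A)=\mu(A\cap P)$ and $\mu^-(A)=-\mu(A\cap N)$ from the hypothesis exactly as in Eq.~\eqref{1} of the uniqueness argument, and then identify the pair with the decomposition constructed in \eqref{01}--\eqref{02}, which Theorem~\ref{J} already certifies as the unique Jordan decomposition. No gaps; your concluding remark about why one Hahn decomposition suffices matches the paper's own comment following the corollary.
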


Therefore, when in search for a Jordan decomposition, the described property only needs 
to be shown for one Hahn decomposition.

\begin{proof}
Similar to Eq.~\eqref{1}, it holds that $\mu^+(A) = \mu(A \cap P)$
and $\mu^-(A) = - \mu(A \cap N)$, $A\in\mathcal{A}$,
under the conditions and for the Hahn decomposition $(P,N)$ of the corollary.
\end{proof}

\begin{corollary}
Given a Jordan decomposition $(\mu^+, \mu^-)$ of a finite signed measure $\mu$,
\begin{eqnarray}
\label{10}
\mu^+(A) & = & \sup_{B\in\mathcal{A}, B\subset A} \mu(B) \\
\label{11}
\mu^-(A) & = & -\inf_{B\in\mathcal{A} B\subset A} \mu(B) 
\end{eqnarray}
for any $A\in\mathcal{A}$. Furthermore, if $\mu = \nu^+ - \nu^-$ for
a pair of finite non-negative measures $(\nu^+, \nu^-)$, then
\begin{equation}
\label{13}
\nu^+ \geq \mu^+ \text{ and } \nu^- \geq \mu^- .
\end{equation}
\end{corollary}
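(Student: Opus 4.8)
The plan is to fix a Hahn decomposition $(P,N)$ of $\mu$ and exploit the identity $\mu^+(A) = \mu(A\cap P)$ together with $\mu^-(A) = -\mu(A\cap N)$, which holds for \emph{any} Hahn decomposition as established in and after the proof of Theorem \ref{J}. I would prove the extremal formulas \eqref{10} and \eqref{11} first, and then obtain the minimality \eqref{13} as an immediate consequence, since the supremum/infimum characterizations make no reference to a particular decomposition and are therefore perfectly suited for comparison with a competing pair.

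For \eqref{10} I would establish both inequalities. The choice $B = A\cap P\in\mathcal{A}$ satisfies $B\subset A$ and yields $\mu(B) = \mu(A\cap P) = \mu^+(A)$, so the supremum is at least $\mu^+(A)$. For the reverse inequality, take an arbitrary $B\in\mathcal{A}$ with $B\subset A$ and split $\mu(B) = \mu(B\cap P) + \mu(B\cap N)$. Since $B\cap N\subset N$, Theorem \ref{H} gives $\mu(B\cap N)\le 0$, while $\mu(B\cap P) = \mu^+(B)\le\mu^+(A)$ by monotonicity of the measure $\mu^+$ and $B\subset A$. Hence $\mu(B)\le\mu^+(A)$ for every admissible $B$, so $\sup_B\mu(B)\le\mu^+(A)$, and equality follows. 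Formula \eqref{11} follows the same pattern with the roles of $P$ and $N$ interchanged, using $\mu(B\cap P)\ge 0$ and $\mu(B\cap N) = -\mu^-(B)\ge -\mu^-(A)$.

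For the minimality \eqref{13}, suppose $\mu = \nu^+ - \nu^-$ with non-negative finite $\nu^+,\nu^-$. For any $B\in\mathcal{A}$ with $B\subset A$, I would bound $\mu(B) = \nu^+(B) - \nu^-(B)\le\nu^+(B)\le\nu^+(A)$, using $\nu^-\ge 0$ and monotonicity of $\nu^+$. Taking the supremum over such $B$ and invoking \eqref{10} gives $\mu^+(A)\le\nu^+(A)$. Symmetrically, $\mu(B)\ge -\nu^-(B)\ge -\nu^-(A)$ yields $-\inf_B\mu(B)\le\nu^-(A)$, and \eqref{11} gives $\mu^-(A)\le\nu^-(A)$. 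As $A\in\mathcal{A}$ is arbitrary, this is precisely \eqref{13}.

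There is no serious obstacle here; the entire argument is a short sequence of monotonicity and sign estimates inherited directly from Theorem \ref{H}. The only point demanding care is the logical ordering: the extremal formulas must be proved before the minimality, because they express $\mu^+$ and $\mu^-$ in a form that is manifestly independent of the chosen decomposition, which is exactly what makes the comparison with $(\nu^+,\nu^-)$ a one-line consequence.
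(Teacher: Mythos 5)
Your proof is correct. For the extremal formulas \eqref{10} and \eqref{11} you argue essentially as the paper does: the set $A\cap P$ attains the supremum because $\mu^+(A)=\mu(A\cap P)$, and every other $B\subset A$ gives a smaller value; your splitting $\mu(B)=\mu(B\cap P)+\mu(B\cap N)$ with $\mu(B\cap N)\le 0$ is just a cosmetic variant of the paper's $\mu(B)=\mu^+(B)-\mu^-(B)\le\mu^+(B)$. Where you genuinely diverge is in the minimality statement \eqref{13}: you deduce it directly from \eqref{10} and \eqref{11} by bounding $\mu(B)=\nu^+(B)-\nu^-(B)\le\nu^+(A)$ and taking the supremum, whereas the paper does not use the extremal formulas at all and instead runs a proof by contradiction, assuming $\nu^+(A)<\mu^+(A)$, localizing to $A\cap P$, and deriving $\nu^-(A\cap P)<0$. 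Your route is arguably the more natural one --- it explains \emph{why} the sup/inf characterization is stated first, namely because it expresses $\mu^\pm$ in a decomposition-free way that makes the comparison with any competing pair $(\nu^+,\nu^-)$ a one-line estimate --- while the paper's contradiction argument has the mild advantage of being self-contained, not depending on the first half of the corollary. Both are complete and correct.
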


Eq.~\eqref{13} means that the Jordan decomposition $(\mu^+, \mu^-)$ is the 
{\em minimal} decomposition of $\mu$ into a difference of two non-negative measures.
This is also called the {\em minimality property} of the Jordan decomposition.

\begin{proof}
For $B\in\mathcal{A}, B\subset A$, and any Hahn decomposition $(P,N)$, one has
\begin{equation}
\mu(B) = \mu^+(B) - \mu^-(B) \leq \mu^+(B) \leq \mu^+(A) = \mu(A\cap P) ,
\end{equation}
which, by $A\cap P\subset A$, implies \eqref{10}. Similarly, 
\begin{equation}
\mu(B) = \mu^+(B) - \mu^-(B) \geq - \mu^-(B) \geq - \mu^-(A) = -\mu(A\cap N) ,
\end{equation}
implies \eqref{11}. The second part we show by contradiction, hence assume
for some $A\in\mathcal{A}$ that $\nu^+(A) < \mu^+(A)$. 
Therefore, $\nu^+(A\cap P) + \nu^+(A\cap N) < \mu^+(A\cap P) + \mu^+(A\cap N)$.
It follows that
\begin{equation}
\label{12}
\nu^+(A\cap P) < \mu^+(A\cap P) ,
\end{equation}
since $\mu^+(A\cap N) = 0$ and 
$\nu^+(A\cap N) \geq 0$. Because of \eqref{12}, we must have 
$\nu^-(A\cap P) < \mu^-(A\cap P) = 0$, which is a contradiction to the non-negativity of
$\nu^-$. The proof of $\nu^- \geq \mu^-$ works similar.
\end{proof}

%%%%%%%%%%%%%%%%%%%%%%%%%%%%%%%%%%%%%%%%%%%%%%%%%%%%%%%%%%%%%%%%%%%%%%%%%%%%%%%%%%%%%%%%%%%%%%%%%%%%%%%%%%%%
%%%%%%%%%%%%%%%%%%%%%%%%%%%%%%%%%%%%%%%%%%%%%%%%%%%%%%%%%%%%%%%%%%%%%%%%%%%%%%%%%%%%%%%%%%%%%%%%%%%%%%%%%%%%

%%%%%%%%%%%%%%%%%%%%%%%%%%%%%%%%%%%%%%%%%%%%%%%%%%%%%%%%%%%%%%%%%%%%%%%%%%%%%%%%%%%%%%%%%%%%
%%%%%%%%%%%%%%%%%%%%%%%%%%%%%%%%%%%%%%%%%%%%%%%%%%%%%%%%%%%%%%%%%%%%%%%%%%%%%%%%%%%%%%%%%%%%

\end{document}